\def\blfootnote{\xdef\@thefnmark{}\@footnotetext}
\newtheorem{theorem}{Theorem}[section]
\newtheorem{lemma}[theorem]{Lemma}
\newtheorem{proposition}[theorem]{Proposition}
\newtheorem{corollary}[theorem]{Corollary}
\theoremstyle{definition}
\newtheorem*{notation}{Notation}
\newtheorem{remark}[theorem]{Remark}
\newtheorem*{definition*}{Definition}
\newtheorem*{ackn}{Acknowledgement}
\numberwithin{equation}{section}
\begin{document}

\title{On a problem of B.~Hartley about a small centralizer in finite and locally finite groups}

\author{Evgeny Khukhro}
\address{E. I. Khukhro: Charlotte Scott Research Centre for Algebra, University of Lincoln, U.K.}
\email{khukhro@yahoo.co.uk}

\keywords{finite groups; locally finite groups; automorphism; centralizer; Fitting height; locally nilpotent}
\subjclass[2010]{20F45, 20D25, 20E36, 20E25, 20F50, 20F19}

\begin{abstract} It is proved that if a finite group $G$ has an automorphism  of order $n$ with $m$ fixed points, then $G$ has a soluble subgroup whose index and Fitting height are  bounded in terms of $m$ and $n$. As a corollary, a problem of B.~Hartley is solved in the affirmative: if a locally finite group $G$ has an element with finite centralizer, then $G$ has a subgroup of finite index which has a finite normal series with locally nilpotent factors.
\end{abstract}

\maketitle

\section{Introduction}

Let $\varphi $ be an automorphism (or an element) of a finite group $G$. Studying the structure of the group $G$ depending on $\varphi $ and the  fixed-point subgroup $C_G(\varphi )$ is one of the most important and fruitful avenues in finite group theory. The celebrated  Brauer--Fowler theorem  \cite{bra-fow55} bounds the index of the soluble radical in terms of the order $|C_G(\varphi )|$ when $|\varphi |=2$. Thompson's theorem \cite{tho59} gives the nilpotency of $G$ when $\varphi $ is of prime order and acts fixed-point-freely, that is, $C_G(\varphi)=1$. These results lie in the foundations of the classification of finite simple groups. Using the classification, Hartley \cite{har92} generalized the Brauer--Fowler theorem to any order of $\varphi$:  the group  $G$ has a soluble subgroup of index bounded in terms of $|\varphi |$ and $|C_G(\varphi )|$.

When the group $G$ is soluble, further information is sought in the first
place in the form of bounds for the Fitting height of the group or of  a suitable `large' subgroup. Especially strong results on bounding the Fitting height have been obtained in the case of  soluble groups of coprime automorphisms $A\leqslant  {\rm Aut\,}G$. Thompson \cite{tho64} proved that if both groups $G$ and $A$ are soluble and have coprime orders, then the Fitting height of $G$ is bounded in terms of the Fitting height of $C_G(A)$ and the number $\alpha (A)$ of prime factors of $|A|$ with account for multiplicities. Later the bounds in Thompson's theorem were improved in numerous papers, like Kurzweil's \cite{kur}, with definitive results obtained by Turull \cite{tur84} and Hartley and Isaacs \cite{har-isa90} with linear bounds in terms of $\alpha (A)$ for the Fitting height of the group or of a `large' subgroup.  A rank analogue of the Hartley--Isaacs theorem was proved by Mazurov and Khukhro \cite{khu-maz06}.

The case of non-coprime orders of $G$ and $A\leqslant {\rm Aut\,}G$ is more difficult. Bell and Hartley~\cite{be-ha} constructed examples showing that for any non-nilpotent finite group $A$ there are soluble groups $G$  of  arbitrarily high Fitting height admitting $A$ as a fixed-point-free group of automorphisms.   But if $A$ is nilpotent  and $C_G(A)=1$,  then the Fitting height of $G$ is bounded in terms of $\alpha (A)$ by a special case of Dade's  theorem \cite{dad69}.
Unlike the aforementioned `linear' bounds in the coprime case, the bound in Dade's paper was exponential. Improving this bound to a linear one is a difficult problem even in the case of a cyclic fixed-point-free group of automorphisms $A=\langle\varphi\rangle$; it was tackled in some special cases by Ercan and G\"{u}lo\u{g}lu \cite{er12,er04,er08}. So far the best general result in this area is a quadratic bound $7\alpha (\langle\varphi\rangle)^2$ obtained by Jabara \cite{jab}.

 Jabara's proof in \cite{jab} used a result of Busetto and Jabara \cite{bu-ja} about a soluble finite group~$G$ factorized into three different Hall subgroups $G=AB=BC=AC$: then the Fitting height of  $G$ is bounded in terms of the Fitting heights of $A,B,C$. The latter result is used in the proof of the following main theorem of the present paper. Henceforth we write, say,  ``$(a,b,\dots)$-bounded'' to abbreviate ``bounded above by some function depending only on the parameters $a,b,\dots$".

\begin{theorem}\label{t-fin}
If a finite group  $G$ admits an automorphism $\varphi$ of order $n$ having $m=|C_G(\varphi )|$ fixed points,  then $G$ has a soluble subgroup whose index and  Fitting height are $(m,n)$-bounded.
\end{theorem}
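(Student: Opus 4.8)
\emph{Proof strategy.} The plan is to first reduce to a soluble $\varphi$-invariant group and then bound its Fitting height. By Hartley's theorem \cite{har92}, $G$ has a soluble subgroup $S_0$ of $(m,n)$-bounded index. Its intersection $S=\bigcap_{i=0}^{n-1}\varphi^{i}(S_0)$ over the $\varphi$-images is again soluble, is $\varphi$-invariant, has $(m,n)$-bounded index (a product of at most $n$ equal indices $|G:S_0|$), and satisfies $|C_S(\varphi)|\leqslant|C_G(\varphi)|=m$. Since any subgroup of bounded index in $S$ has bounded index in $G$, we may replace $G$ by $S$; thus it remains to bound, in terms of $m$ and $n$, the Fitting height $h(G)$ of a soluble group $G$ on which $\varphi$ acts with at most $m$ fixed points, and I would induct on the number of prime divisors of $n$.

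The coprime primes are handled by the classical coprime theory. With $\pi=\pi(n)$, take a $\varphi$-invariant Hall $\pi'$-subgroup $H$ of $G$, on which $\varphi$ acts coprimely with $|C_H(\varphi)|\leqslant m$. By Turull's theorem \cite{tur84}, $h(H)$ is bounded by a linear function of $\alpha(\langle\varphi\rangle)$ plus $h(C_H(\varphi))$; as $|C_H(\varphi)|\leqslant m$, its Fitting height is at most $\log_2 m$, so $h(H)$ is $(m,n)$-bounded. The genuinely new difficulty is posed by the non-coprime primes $p\mid\gcd(n,|G|)$, which — unlike in the fixed-point-free situation of \cite{jab} — cannot be excluded once $C_G(\varphi)\neq1$. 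For a \emph{single} such prime I would appeal to the theory of almost fixed-point-free automorphisms of prime-power order: the Fong and Hartley--Meixner theorem for an automorphism of prime order with $m$ fixed points, together with its extension by Khukhro to prime-power order, supplies a subgroup of bounded index whose Fitting height is bounded in terms of $m$ and the order of the automorphism.

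It remains to combine the coprime contribution with the several non-coprime primes, and here I would use the Busetto--Jabara factorization theorem \cite{bu-ja} as in \cite{jab}. For any partition $\pi(G)=\pi_1\cup\pi_2\cup\pi_3$ the corresponding $\varphi$-invariant Hall $\pi_i'$-subgroups $H_1,H_2,H_3$ satisfy $G=H_iH_j$ for $i\neq j$, since $\pi_i'\cup\pi_j'=\pi(G)$; thus $G=H_1H_2=H_2H_3=H_3H_1$ is a product of three Hall subgroups and \cite{bu-ja} bounds $h(G)$ in terms of $h(H_1),h(H_2),h(H_3)$. One would choose the partition, and arrange the induction on the non-coprime primes, so that each factor involves strictly fewer non-coprime primes than $G$ and hence has $(m,n)$-bounded Fitting height. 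The main obstacle is precisely this combination: the single-prime results apply to the primary components $\varphi_p$ of $\varphi$, yet only the \emph{total} count $|C_G(\varphi)|=m$ is controlled, while $|C_G(\varphi_p)|$ may be arbitrarily large, so the non-coprime primes cannot simply be estimated in isolation and summed. The delicate point is to set up the Hall factorizations and the inductive bookkeeping so that at every stage the relevant fixed-point subgroup stays governed by $m$ while the number of uncontrolled primes decreases; making this reduction precise — rather than the coprime input from \cite{tur84} or the formal application of \cite{bu-ja} — is where the real work lies.
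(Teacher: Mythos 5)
There is a genuine gap, and it is precisely the one the paper identifies as the crux of the problem: you assume, both for $\pi=\pi(n)$ and for your partition $\pi(G)=\pi_1\cup\pi_2\cup\pi_3$, that $G$ has $\varphi$-invariant Hall subgroups. For a non-coprime automorphism this fails, even in soluble groups and even when the Hall subgroup in question has order coprime to $|\varphi|$. Concretely, take $G=S_4$ and let $\varphi$ be conjugation by a $3$-cycle $g$, so $n=3$ and $m=|C_G(g)|=3$: a $\varphi$-invariant Hall $\{3\}'$-subgroup would be a Sylow $2$-subgroup $D_8$ normalized by $g$, but each $D_8$ is self-normalizing in $S_4$, so none exists. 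Thus your very first step after the reduction to the soluble case ("take a $\varphi$-invariant Hall $\pi'$-subgroup $H$ of $G$") is unavailable, and the same objection kills the three Hall subgroups $H_1,H_2,H_3$ needed for the Busetto--Jabara factorization. The paper's proof is organized around repairing exactly this: by Lemma~\ref{l-aut}(c), if $N\trianglelefteq G$ is $\varphi$-invariant with $|C_{G/N}(\varphi)|=|C_G(\varphi)|$, then $N\subseteq I_G(\varphi)$, and then (Lemma~\ref{l-hall}) any Hall subgroup $S$ of $N$ with $S^\varphi=S^a$, $a=[b,\varphi]$, yields the $\varphi$-invariant conjugate $S^{b^{-1}}$. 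So invariant Hall subgroups are obtained not in $G$ itself but in such a normal subgroup $N$, which forces the two-part structure of Proposition~\ref{p-p} and the induction on $m+n$: part (b) bounds $h(N)$ via Theorem~\ref{t-bj} applied inside $N$ with Hall $p'$-, $q'$-, $r'$-subgroups for three primes dividing $n$ (the case of at most two prime divisors of $n$ being the biprimary Theorem~\ref{t-kh}), and part (a) follows because in $G/F_{g(m,n)+1}(G)$ the centralizer order strictly drops, feeding the induction.

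The second difficulty — which, to your credit, you name explicitly at the end — is also left unresolved in your proposal: you cannot control $|C_G(\varphi_p)|$ for the primary components $\varphi_p$, so neither the Fong/Hartley--Meixner circle of results nor Hartley--Turau \cite{ha-tu} applies prime by prime, and your plan to "induct on the number of non-coprime primes" is never given a working mechanism. The paper's resolution (Lemma~\ref{l-hp}) is an order-reduction trick rather than a prime-power theorem: on the invariant Hall $p'$-subgroup $N_{p'}$ the component $\varphi_p$ acts coprimely, and although $C_{N_{p'}}(\varphi_p)$ may be huge, it admits the automorphism $\varphi_{p'}$ of strictly smaller order $n/p^{k_p}$ with at most $m$ fixed points, so the induction hypothesis for part (a) bounds $h(C_{N_{p'}}(\varphi_p))$, after which Thompson's theorem \cite{tho64} (not fixed-point-freeness) bounds $h(N_{p'})$. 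Your coprime step via Turull \cite{tur84} is sound in spirit but is applied to a subgroup whose existence you have not established, and the remainder of the argument is a program whose two essential steps — producing invariant Hall subgroups and taming the non-coprime components — are exactly the content of Lemmas~\ref{l-hall} and~\ref{l-hp}, neither of which your sketch supplies.
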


In the proof, the reduction to the soluble case is given by Hartley' theorem \cite{har92} based on the classification of  finite simple groups. Earlier this kind of result for non-coprime automorphisms had been known for an automorphism whose  order is a product of two prime-powers \cite{hartley, khu15}, and this `biprimary' case is essentially used in the proof of Theorem~\ref{t-fin}. One of the difficulties in dealing with a non-coprime automorphism $\varphi$ is in the fact that there may not exist $\varphi$-invariant Sylow or Hall subgroups. In Jabara's paper~\cite{jab}, although for a non-coprime automorphism $\varphi$, the existence of such $\varphi$-invariant Hall subgroups was guaranteed by the condition that $\varphi$ was fixed-point-free. A crucial step in the proof of Theorem~\ref{t-fin} is a reduction to the situation where $\varphi$-invariant Hall subgroups do exist.

Because the order of the automorphism in Theorem~\ref{t-fin} is not assumed to be coprime to $|G|$, the result also applies to any element with given order of centralizer.

\begin{corollary}\label{c-fin}
If a finite group  $G$ contains an element $g$ with centralizer of order $m=|C_G(g )|$,  then $G$ has a soluble subgroup whose index and  Fitting height are $m$-bounded.
\end{corollary}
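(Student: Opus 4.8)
The plan is to realize the element $g$ as an automorphism of $G$ and then deduce the corollary directly from Theorem~\ref{t-fin}; the only thing that needs checking is that the order parameter $n$ can be absorbed into $m$.

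First I would let $\varphi$ denote the inner automorphism of $G$ induced by conjugation by $g$, say $\varphi\colon x\mapsto g^{-1}xg$. Its fixed-point subgroup consists precisely of the elements commuting with $g$, so $C_G(\varphi)=C_G(g)$ and hence $|C_G(\varphi)|=m$. Thus $\varphi$ is an automorphism with exactly $m$ fixed points, and Theorem~\ref{t-fin} will apply once we control its order.

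Next I would bound the order $n=|\varphi|$ in terms of $m$. Since $\varphi^{k}$ is the identity exactly when $g^{k}$ is central, the order $n$ equals the order of the image of $g$ in $G/Z(G)$, which divides the order of $g$ in $G$. The key point — really the only step requiring an argument — is that $g$ commutes with itself, so $\langle g\rangle\leqslant C_G(g)$, whence by Lagrange's theorem $|g|$ divides $|C_G(g)|=m$. Consequently $n$ divides $m$, and in particular $n\leqslant m$.

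Finally, applying Theorem~\ref{t-fin} to the automorphism $\varphi$ of order $n$ with $m=|C_G(\varphi)|$ fixed points produces a soluble subgroup of $G$ whose index and Fitting height are $(m,n)$-bounded. Because $n\leqslant m$, any function of $m$ and $n$ is majorized by a function of $m$ alone, so these quantities are in fact $m$-bounded, as required. I do not expect a genuine obstacle here: the entire content of the corollary is the passage from an element to its inner automorphism together with the elementary estimate $|g|\mid m$, and all the real difficulty is concentrated in Theorem~\ref{t-fin} itself.
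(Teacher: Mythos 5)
Your proposal is correct and coincides with the paper's own proof: the paper likewise applies Theorem~\ref{t-fin} to the inner automorphism induced by $g$, noting that its order divides $m$ (since $\langle g\rangle\leqslant C_G(g)$) while the number of fixed points is exactly $m$. Your closing remark that a function of $(m,n)$ with $n\leqslant m$ is majorized by a function of $m$ alone (by taking the maximum over the finitely many admissible $n$) is the same routine absorption the paper performs implicitly.
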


One may conjecture, by analogy with the coprime theorem of Hartley and Isaacs \cite{har-isa90}, that under the hypotheses of Theorem~\ref{t-fin} there should be a subgroup of $(m,n)$-bounded index whose Fitting height is bounded in terms of $|\varphi|$ alone, or even in terms of $\alpha(\langle\varphi\rangle)$. So far this result is only known for automorphisms of  prime-power order due to Hartley and Turau \cite{ha-tu}.

But importantly, Theorem~\ref{t-fin} is sufficient for giving the following affirmative answer to Hartley's problem, which was recorded in 1995 as Problem~13.8(a) in the ``Kourovka Notebook'' \cite{kour} by Belyaev.

\begin{corollary}\label{c-loc-fin}
If a locally finite group  $G$ has an element $g $ with centralizer of order $m=|C_G(g)|$,  then $G$ has a locally soluble subgroup of finite $m$-bounded index which has a normal series of finite $m$-bounded length with locally nilpotent factors.
\end{corollary}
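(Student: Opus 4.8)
The plan is to deduce the corollary from Corollary~\ref{c-fin} by an inverse-limit argument over the finite subgroups of $G$. First note that $g\in C_G(g)$, so $\langle g\rangle$ is a subgroup of the finite group $C_G(g)$ and hence $|g|$ divides $m$; in particular the order of $g$ is already $m$-bounded. Let $\mathcal F$ denote the set of finite subgroups of $G$ that contain $g$, partially ordered by inclusion. This set is directed, since for $K_1,K_2\in\mathcal F$ the subgroup $\langle K_1,K_2\rangle$ is finite by local finiteness and contains $g$, and it is cofinal in $G$ because every $x\in G$ lies in $\langle g,x\rangle\in\mathcal F$. For each $K\in\mathcal F$ we have $C_K(g)\leqslant C_G(g)$, so $|C_K(g)|\leqslant m$, and Corollary~\ref{c-fin} supplies a soluble subgroup of $K$ of index at most $c=c(m)$ and Fitting height at most $h=h(m)$, with $c$ and $h$ independent of $K$.

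Next I would produce a locally soluble subgroup of $m$-bounded index. For $K\in\mathcal F$ let $\mathcal T(K)$ be the nonempty finite set of subgroups $T\leqslant K$ with $|K:T|\leqslant c$ and $T$ soluble of Fitting height at most $h$. If $K\leqslant L$ in $\mathcal F$, then $T\mapsto T\cap K$ maps $\mathcal T(L)$ into $\mathcal T(K)$: one has $|K:K\cap T|\leqslant|L:T|\leqslant c$, and $T\cap K$ is a subgroup of the soluble group $T$, so its Fitting height is at most $h$. These intersection maps compose correctly, so $(\mathcal T(K))_{K\in\mathcal F}$ is an inverse system of nonempty finite sets over the directed poset $\mathcal F$, and its inverse limit is nonempty by a finite-intersection-property argument in the compact product $\prod_K\mathcal T(K)$. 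A point of the limit is a compatible family $(T_K)$ with $T_K\in\mathcal T(K)$ and $T_K=T_L\cap K$ whenever $K\leqslant L$. Then $S:=\bigcup_K T_K$ is a subgroup, the union being directed; its index in $G$ is at most $c$, since any $c+1$ representatives of distinct cosets of $S$ lie together with $g$ in some $K\in\mathcal F$ and would force $|K:T_K|>c$; and every finite subgroup of $S$ lies in a single $T_K$, hence is soluble of Fitting height at most $h$. Thus $S$ is locally soluble and all its finite subgroups have Fitting height at most $h$.

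It remains to equip $S$ with a normal series of length at most $h$ with locally nilpotent factors, and this is the step I expect to be the main obstacle. The naive idea of factoring out the Hirsch--Plotkin radical fails, because the Fitting subgroup of a finite subgroup of $S$ need not lie in any locally nilpotent normal subgroup of $S$, so the local choices must once more be reconciled. I would therefore argue by induction on $h$ using a second inverse limit, now over the finite subgroups of $S$: for finite $K\leqslant S$ let $\mathcal N(K)$ be the nonempty finite set of normal subgroups $N\trianglelefteq K$ that are nilpotent with $K/N$ of Fitting height at most $h-1$, the Fitting subgroup of $K$ being such an $N$. The crucial point is again that $N\mapsto N\cap K$ maps $\mathcal N(L)$ into $\mathcal N(K)$ for $K\leqslant L$, since $N\cap K$ is normal in $K$ and nilpotent while $K/(N\cap K)\cong KN/N\leqslant L/N$ has Fitting height at most $h-1$. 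A compatible family $(N_K)$ from the limit assembles into $R:=\bigcup_K N_K$, a locally nilpotent normal subgroup of $S$ for which every finite subgroup of $S/R$ has Fitting height at most $h-1$; the inductive hypothesis applied to the locally finite group $S/R$ then yields a normal series of length at most $h-1$ with locally nilpotent factors, which pulls back through $R$ to the desired series of length at most $h$ in $S$, with base case $h\leqslant 1$ immediate. In both stages the entire work beyond the finite theorem is the compatibility of the local choices, and the common engine is that intersecting with a smaller finite subgroup preserves the relevant property, which is exactly what makes the inverse systems well defined and their limits usable.
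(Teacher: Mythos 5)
Your proposal is correct and takes essentially the same route as the paper: the identical inverse system, over the directed family of finite subgroups containing $g$, of nonempty finite sets of subgroups of index at most $f(m)$ and Fitting height at most $g(m)$, with intersection maps, a nonempty inverse limit by compactness, and the union of a compatible family giving the subgroup. Your second inverse limit (inducting on the Fitting-height bound $h$ via compatible choices of nilpotent normal subgroups $N_K\trianglelefteq K$ with $h(K/N_K)\leqslant h-1$) is a correct and welcome elaboration of the step the paper compresses into ``gives the required subgroup'', namely passing from the uniform bound on Fitting heights of finite subgroups to an actual normal series with locally nilpotent factors; one could alternatively fold this into the first system by letting each element of $S_H$ carry a chosen normal series with nilpotent factors, since intersecting such a series with a smaller subgroup again yields one.
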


This corollary follows by the standard inverse limit argument, after application of
Corollary~\ref{c-fin} to every finite subgroup of $G$ containing $g$.

\begin{remark}\label{r-1}
  Bounds for the Fitting height naturally reduce further studies of finite groups with (almost) fixed-point-free automorphisms to the case of nilpotent groups, where there are important open  questions of bounding the derived length, or the nilpotency class. For example, it is still unknown if the derived length of a nilpotent finite group $G$ with a fixed-point-free automorphism $\varphi$ is bounded in terms of $|\varphi|$, apart from the  cases of $\varphi$ of prime order due to Higman  \cite{hi} and Kreknin and Kostrikin \cite{kr, kr-ko}, or of order $4$ due to Kov\'acs \cite{kov}. The same applies to almost fixed-point-free automorphisms, where the nice results have been obtained only for $\varphi$ of prime order by Khukhro \cite{khu90}, or of order 4  by Khukhro and Makarenko \cite{mak-khu06}. In addition, definitive general results have been obtained in the study of almost fixed-point-free $p$-automorphisms of finite $p$-groups by Alperin  \cite{alp}, Jaikin-Zapirain \cite{jai, jai02}, Khukhro \cite{khu85,khu93}, Medvedev \cite{me}, Shalev  \cite{sha93}.
\end{remark}

\begin{remark}\label{r-2}
Automorphisms with restricted fixed-point subgroups have also been studied in the context of residually finite and profinite groups. As a consequence of Thompson's theorem \cite{tho59}, a profinite group  $G$ with a coprime fixed-point-free automorphism of prime order $p$ is pronilpotent, and moreover, $G$ is nilpotent of $p$-bounded class due to the theorems of Higman  \cite{hi} and Kreknin and Kostrikin \cite{kr, kr-ko}. Similarly, there are profinite analogues of the results on finite groups with a coprime automorphism of prime order, or order 4,  having  few fixed points,  which were mentioned in Remark~\ref{r-1}.
 Shalev \cite{sh98} proved that if a residually finite torsion group admits a finite 2-group $Q$  of automorphisms for which $C_G(Q)$  is finite, then $G$
 is locally finite.  Shumyatsky \cite{shu00}
proved that a torsion pro-$p$-group admitting a fixed-point-free automorphism of finite order has finite exponent.  Using, inter alia, Khukhro's theorem \cite{khu90} on almost fixed-point-free automorphism of prime order, Shalev \cite{sh94} proved that
if every centralizer of an element of a profinite group $G$  is either finite or has finite index, then $G$ is abelian-by-finite. It is also worth mentioning that the  proof of Conjecture~C  about pro-$p$-groups of finite coclass by Shalev and Zelmanov \cite{sh-ze} relied on Kreknin's theorem \cite{kr} on solubility of Lie rings admitting a fixed-point-free automorphism of finite order.
Other restrictions on fixed points of automorphisms of profinite groups include Engel-type conditions, and we mention a few recent results in this direction. Acciarri, Khukhro, and Shumyatsky \cite{aks} proved that if a profinite group $G$ admits a coprime automorphism $\varphi$ of prime order  such that all elements of $C_G(\varphi)$  are right Engel, then $G$  is locally nilpotent. Khukhro and Shumyatsky \cite{khu-shu22} proved that if a profinite group $G$  admits a coprime automorphism $\varphi$ of prime order such that each element of $C_G(\varphi)$ has a finite right Engel sink, then $G$ contains an open locally nilpotent subgroup.
Khukhro and Shumyatsky \cite{khu-shu22a} proved that if a profinite group $G$ admits an elementary abelian $q$-group of automorphisms $A$ of order $q^2$ such that for each $a\in A\setminus \{1\}$  every element of $C_G(a)$
 has a countable Engel sink, then $G$  has a finite normal subgroup $N$
 such that $G/N$
 is locally nilpotent.
\end{remark}

\begin{ackn}
The author is grateful to the anonymous referees for their careful reading and several helpful comments and suggestions. 
\end{ackn}

\section{Preliminaries}

In this section we recall some definitions and auxiliary facts about automorphisms. We also state for convenience two theorems that are used in the next section. Other well-known results of Dade, Hartley, Thompson, et al. that are used in the proofs are left as references to the literature.

If a group $A$ acts by automorphisms on a group $B$ we use the usual notation for  centralizers $C_B(A)=\{b\in B\mid b^a=b \text{ for all }a\in A\}$ and for  commutators $[b,a]=b^{-1}b^a$ regarding  $b\in B$, $a\in A$ as elements of the natural semidirect product $B\rtimes A$. The automorphism induced by an automorphism $\varphi  $ on the quotient by a normal $\varphi $-invariant subgroup is denoted by the same letter $\varphi $.  An automorphism $\varphi$ of a finite group $G$ is said to be coprime if  the orders of~$\varphi$ and $G$ are coprime: $(|G|,|\varphi|)=1$.

We now recall some well-known properties of arbitrary, not necessarily coprime, automorphisms of finite groups.

\begin{notation}
For an automorphism $\varphi$ of a finite group $G$, let $I_G(\varphi)$ denote the set of all commutators $[g,\varphi]=g^{-1}g^\varphi$, $g\in G$.
\end{notation}

Although at least the first parts of the next lemma are well-known, we reproduce their proofs for the benefit of the reader.

\begin{lemma}  \label{l-aut}
  Let $\varphi$ be an automorphism of a finite group $G$.
\begin{itemize}
  \item[\rm (a)]  The cardinality $|I_G(\varphi)|$ is equal to the index of $C_G(\varphi)$ in $G$.

  \item[\rm (b)] If $N$ is a normal $\varphi$-invariant subgroup of $G$, then $|C_{G/N}(\varphi)|\leqslant |C_G(\varphi)|$.

  \item[\rm (c)] If $N$ is a normal $\varphi$-invariant subgroup of $G$ such that
  $|C_{G/N}(\varphi)|= |C_G(\varphi)|$, then $N\subseteq I_G(\varphi)$.
\end{itemize}
\end{lemma}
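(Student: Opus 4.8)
The plan is to prove (a) first as a fiber-counting statement, and then to derive both (b) and (c) by applying (a) to a cleverly chosen $\varphi$-invariant subgroup of $G$, rather than working directly inside $G/N$.

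For (a), I would examine the fibers of the map $\theta\colon G\to G$, $g\mapsto [g,\varphi]=g^{-1}g^\varphi$, whose image is $I_G(\varphi)$ by definition. The key computation is that $[g,\varphi]=[h,\varphi]$, i.e.\ $g^{-1}g^\varphi=h^{-1}h^\varphi$, is equivalent (after left-multiplying by $h$ and right-multiplying by $(g^\varphi)^{-1}$) to $hg^{-1}=(hg^{-1})^\varphi$, that is, $hg^{-1}\in C_G(\varphi)$. Here the step $h^\varphi(g^\varphi)^{-1}=(hg^{-1})^\varphi$ is exactly where I use that $\varphi$ is an automorphism. Thus $\theta(g)=\theta(h)$ precisely when $g$ and $h$ lie in the same right coset of $C_G(\varphi)$, so the fibers of $\theta$ are these cosets and $|I_G(\varphi)|=[G:C_G(\varphi)]$.

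For (b), I would introduce the full preimage $H=\{g\in G : [g,\varphi]\in N\}$ of $C_{G/N}(\varphi)$ under the projection $G\to G/N$; indeed $gN\in C_{G/N}(\varphi)$ iff $g^{-1}g^\varphi\in N$. One checks that $H$ is a subgroup containing $N$, that $H$ is $\varphi$-invariant (using $\varphi$-invariance of $N$), and that $H/N=C_{G/N}(\varphi)$, so $|C_{G/N}(\varphi)|=|H|/|N|$. Since $C_G(\varphi)\subseteq H$ we have $C_H(\varphi)=C_G(\varphi)$, so applying (a) to the action of $\varphi$ on $H$ gives $|I_H(\varphi)|=[H:C_G(\varphi)]$. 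By construction $I_H(\varphi)\subseteq N$, hence $[H:C_G(\varphi)]=|I_H(\varphi)|\leqslant |N|$, which rearranges to $|C_{G/N}(\varphi)|=|H|/|N|\leqslant |C_G(\varphi)|$.

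For (c), I would simply track the equality case of this argument. If $|C_{G/N}(\varphi)|=|C_G(\varphi)|$, then $|H|/|N|=|C_G(\varphi)|$, so $|I_H(\varphi)|=|H|/|C_G(\varphi)|=|N|$; since $I_H(\varphi)\subseteq N$ and the two finite sets have equal cardinality, $I_H(\varphi)=N$. As $H\leqslant G$ gives $I_H(\varphi)\subseteq I_G(\varphi)$, we conclude $N\subseteq I_G(\varphi)$. The main obstacle I anticipate is recognizing that (b) and (c) are not best handled on $G/N$ directly but by passing to the preimage $H$ and reapplying (a) there; once the identity $|I_H(\varphi)|=[H:C_G(\varphi)]$ is combined with the containment $I_H(\varphi)\subseteq N$, both the inequality and its sharp case follow by elementary counting, while the coset computation in (a) is routine provided one is careful with noncommutativity and the placement of $\varphi$.
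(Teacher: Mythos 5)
Your proof is correct, and while your part (a) is the same fiber-counting argument as the paper's, your treatment of (b) and (c) takes a genuinely different route. The paper works globally: it observes that the natural map $G\to G/N$ sends $I_G(\varphi)$ onto $I_{G/N}(\varphi)$ with fibers contained in cosets of $N$, deduces the counting inequality $|I_G(\varphi)|\leqslant |N|\cdot |I_{G/N}(\varphi)|$, and gets (b) by applying (a) to both $G$ and $G/N$; for (c) it analyzes the equality case of that inequality, which forces every full inverse image of every element of $I_{G/N}(\varphi)$ to consist of commutators, and in particular the inverse image of $1$, which is $N$. You instead localize: you pass to the full preimage $H=\{g\in G:[g,\varphi]\in N\}$ of $C_{G/N}(\varphi)$, check it is a $\varphi$-invariant subgroup with $C_H(\varphi)=C_G(\varphi)$ and $I_H(\varphi)\subseteq N$, and apply (a) a single time inside $H$ to get $[H:C_G(\varphi)]=|I_H(\varphi)|\leqslant |N|$, with equality in (c) forcing $I_H(\varphi)=N$. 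All your verifications go through: $H$ is a subgroup as the preimage of a subgroup, $\varphi$-invariance follows from $[g^\varphi,\varphi]=([g,\varphi])^\varphi\in N$, and indeed $I_H(\varphi)=I_G(\varphi)\cap N$, so your equality case delivers $N\subseteq I_G(\varphi)$ directly. What each approach buys: the paper's version is shorter to state, never needs to verify subgroup or invariance properties of an auxiliary object, and its equality analysis yields slightly more (every fiber over $I_{G/N}(\varphi)$, not just the one over $1$, consists of commutators); your version avoids counting in the quotient altogether, makes structurally transparent why the bound is exactly $|C_G(\varphi)|$ (namely $C_H(\varphi)=C_G(\varphi)$), and reduces both (b) and (c) to one application of (a) plus elementary cardinality comparison inside $N$.
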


\begin{proof}
  (a) It is easy to verify that the mapping $x\mapsto [x,\varphi]$ factors through to a one-to-one correspondence between right cosets of $C_G(\varphi)$ and elements of $I_G(\varphi)$.

  (b) Every element $[\bar x,\varphi]$ of $I_{G/N}(\varphi)$ is the  image of an element $[x,\varphi]$ of $I_{G}(\varphi)$ under the natural homomorphism $G\to G/N$, where $\bar x$ is the image of $x$. Therefore
  \begin{equation}\label{e-1}
    |I_G(\varphi)|\leqslant |N|\cdot |I_{G/N}(\varphi)|.
  \end{equation}
    Using part (a) we obtain
  \begin{equation}\label{e-2}
  \frac{|G|}{|C_G(\varphi)|}\leqslant |N|\cdot\frac{|G/N|}{|C_{G/N}(\varphi)|}=\frac{|G|}{|C_{G/N}(\varphi)|}.
  \end{equation}
  Hence, $|C_G(\varphi)|\geqslant |C_{G/N}(\varphi)|$.

  (c)  Clearly, for the equation $|C_G(\varphi)|= |C_{G/N}(\varphi)|$ to hold, we must have equalities instead of inequalities in \eqref{e-2} and \eqref{e-1}. The equation $|I_G(\varphi)|= |N|\cdot |I_{G/N}(\varphi)|$ holds exactly when every full inverse image of every element of $I_{G/N}(\varphi)$ under the natural homomorphism $G\to G/N$ consists only of elements of $I_G(\varphi)$. In particular, this applies to the inverse image of $1=[1,\varphi]$, which is $N$.
\end{proof}

For a finite group $H$, we denote by $\pi(H)$ the set of prime divisors of $|H|$.
Recall that if $G$ is a soluble finite group, then for every set of primes $\pi\subseteq\pi(G)$ there are Hall $\pi$-subgroups of $G$, and any two Hall $\pi$-subgroups are conjugate in $G$.

Recall that $F(G)$ denotes the Fitting subgroup of a finite group $G$, which is the largest normal nilpotent subgroup of $G$. The Fitting series of $G$ starts with  $F_1(G) = F (G)$, and by induction $F_{i+1}(G)$ is the inverse image of $F (G/F_i(G))$. If $G$ is soluble, then the least number $h$ such that $F_h(G) = G$ is called the Fitting height $h(G)$ of $G$.

The following theorem of Busetto and Jabara \cite{bu-ja} is instrumental in the proof of the main results.

\begin{theorem}[{\cite[Theorem~1.1]{bu-ja}}]
  \label{t-bj}
  Let $G$ be a finite group and let $\sigma, \tau, \nu$ be three different subsets of $\pi (G)$ such that
  $$
  \pi (G)=\sigma\cup \tau=\sigma\cup \nu= \tau\cup \nu.
   $$
   Let $ G_{\sigma}$, $ G_{\tau}$, $G_{\nu}$ be some Hall $\sigma$-, $\tau$-, $\nu$-subgroups of $G$. Then the Fitting heights of $G$, $ G_{\sigma}$, $ G_{\tau}$, $G_{\nu}$ satisfy the inequality
$$h(G)\leqslant h(G_{\sigma}) + h(G_{\tau} ) + h(G_{\nu}) - 2.$$
\end{theorem}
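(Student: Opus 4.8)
The plan is to prove the inequality by induction on $|G|$, working throughout (as is needed for the Fitting heights to be defined at all) with soluble $G$. First I would record the combinatorial shape of the hypothesis: writing $a=\pi(G)\setminus\sigma$, $b=\pi(G)\setminus\tau$, $c=\pi(G)\setminus\nu$, the three equalities $\pi(G)=\sigma\cup\tau=\sigma\cup\nu=\tau\cup\nu$ are equivalent to the sets $a,b,c$ being pairwise disjoint, so that every prime of $G$ lies in at least two of $\sigma,\tau,\nu$. Fixing a Hall system of the soluble group $G$, I would take $G_\sigma,G_\tau,G_\nu$ to be members of that system, so that they pairwise permute and any two of them share a common Sylow $p$-subgroup for each prime $p$ lying in both corresponding sets. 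The one structural fact I would extract at the outset is that \emph{any two} of $G_\sigma,G_\tau,G_\nu$ generate $G$: since the union of the two corresponding prime sets is all of $\pi(G)$, the subgroup they generate contains a full Sylow $p$-subgroup of $G$ for every $p$, hence has order divisible by $|G|$.

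I would first treat the base case $h(G_\sigma)=h(G_\tau)=h(G_\nu)=1$, where the asserted bound reads $h(G)\leqslant 1$, that is, $G$ is nilpotent; this both anchors the induction and explains the constant $-2$. For each prime $p$ choose two of the three Hall subgroups containing $p$; each is nilpotent, so the common Sylow $p$-subgroup $P$ is normal in both, and since those two subgroups generate $G$, we get $P\trianglelefteq G$. As this holds for every prime, all Sylow subgroups of $G$ are normal and $G$ is nilpotent. The point to notice is that \emph{two} factors are needed to pin down each Sylow subgroup, which is precisely the source of the saving by $2$.

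For the inductive step I would pass to a minimal normal subgroup $N$, an elementary abelian $p$-group with $N\leqslant O_p(G)$. The induction hypothesis applied to $G/N$, together with $h(G_xN/N)\leqslant h(G_x)$ for each $x\in\{\sigma,\tau,\nu\}$, gives $h(G/N)\leqslant h(G_\sigma)+h(G_\tau)+h(G_\nu)-2$ (treating separately the degenerate possibility that two of the sets coincide once $p$ is removed from $\pi$, where one factor becomes all of $G/N$ and the bound is immediate). Since $N$ is nilpotent we have $h(G)\in\{h(G/N),\,h(G/N)+1\}$, so the only problematic case is $h(G)=h(G/N)+1$, in which I must recover one extra unit. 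Because a Hall $x$-subgroup with $p\notin x$ meets $N$ trivially and is unchanged modulo $N$, it suffices to show that in this case $h(G_xN/N)<h(G_x)$ for at least one of the (at least two) indices $x$ with $p\in x$; summing then yields $h(G_\sigma N/N)+h(G_\tau N/N)+h(G_\nu N/N)\leqslant h(G_\sigma)+h(G_\tau)+h(G_\nu)-1$, which together with $h(G)=h(G/N)+1$ closes the induction.

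The heart of the matter --- and the step I expect to be the main obstacle --- is exactly this last claim: that if $N$ is \emph{essential} for $G$, contributing an entire bottom Fitting layer (which is what $h(G)=h(G/N)+1$ encodes), then it is already essential for at least one Hall factor containing $p$. The difficulty is that the essentiality of $N$ for $G$ is governed by the coprime action of the \emph{full} $p'$-Hall subgroup of $G$ on $N\leqslant O_p(G)$, whereas a single factor $G_x$ realises only part of that action, so essentiality need not be visible in any one factor. To overcome this I would localise at $p$, analysing the layer $O_{p',p}(G)$ and the coprime action on $N$ by Hall--Higman-type methods, and then exploit --- exactly as in the base case --- that the two factors containing $p$ together generate $G$ and hence together carry the entire relevant $p'$-action on $N$. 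This combination of a generation argument with the coprime action should force the Fitting height of at least one of these two factors to drop modulo $N$, delivering the missing unit.
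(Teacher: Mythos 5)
You should first be aware that the paper does not prove this statement at all: Theorem~\ref{t-bj} is quoted verbatim from Busetto and Jabara \cite{bu-ja} and used as a black box, so your argument has to stand entirely on its own. Much of it does: the reformulation of the hypothesis as pairwise disjointness of the complements, the reduction (via conjugacy of Hall subgroups) to factors taken from one Hall system, the observation that any two of the three factors generate $G$, the base case proving nilpotency of $G$ (which correctly explains the constant $-2$), and the bookkeeping of the inductive step --- $N\leqslant O_p(G)\leqslant G_x$ for $p\in x$, $h(G)\leqslant h(G/N)+1$, and the fact that a drop $h(G_xN/N)<h(G_x)$ for a single index $x\ni p$ closes the induction --- are all sound.

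But the final claim, that $h(G)=h(G/N)+1$ forces such a drop for some factor containing $p$, is exactly where the entire content of the theorem sits, and you do not prove it; you say the combination of generation with the coprime $p'$-action on $N$ ``should force'' it, and that heuristic is demonstrably inadequate. Since $N\leqslant O_p(G)\leqslant O_p(G_x)\leqslant F(G_x)$, passing to $G_x/N$ generically leaves $h(G_x)$ unchanged; a drop is the exceptional event that a whole Fitting layer of $G_x$ collapses modulo $N$, and the action of $G_x$ on $N$ alone cannot detect it. Concretely, a factor can carry a nontrivial $p'$-action on $N$ without any height increase: for $X=C_3\times A_4$ acting on a $2$-group $N$ with the direct factor $C_3$ acting nontrivially, $NV_4$ is a normal $2$-subgroup of $N\rtimes X$ with quotient $C_3\times C_3$, so $h(N\rtimes X)=2=h(X)$ --- nontrivial action by the odd part of $F(X)$, no drop. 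Moreover the drop is not localizable to a prescribed factor: in $G=S_4$ with $N=V_4$ and $\sigma=\{2,3\}$, $\tau=\{2\}$, $\nu=\{3\}$, one has $h(G)=h(G/N)+1$, yet $h(G_\tau/N)=h(D_8/V_4)=1=h(D_8)$, and the drop occurs only for $G_\sigma=G$ itself; so when neither factor containing $p$ equals $G$, ruling out simultaneous failure of the drop in both requires a genuine structural argument interlacing the two factors, and invoking Hall--Higman-type methods by name is not such an argument. (A smaller loose end: the degenerate case where two restricted prime sets coincide is not ``immediate'' --- there one image is all of $G/N$, giving only $h(G)\leqslant h(G_\sigma)+1$, which suffices only if $h(G_\tau)+h(G_\nu)\geqslant 3$; if both remaining factors are nilpotent you need a separate base-case-style argument.) As it stands, the proposal reduces the theorem to an unproven claim that is essentially equivalent to it.
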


The following theorem was proved by Hartley in the unpublished manuscript \cite{hartley}, and independently by Khukhro~\cite{khu15}. This is of course the special case of Theorem~\ref{t-fin} for an automorphism of order a product of two prime-powers.

\begin{theorem}[{\cite[Theorem~2.4]{hartley}, \cite[Theorem~1.4]{khu15}}]\label{t-kh}
If a finite group  $G$ admits an automorphism $\varphi$ of order $p^aq^b$ for some primes $p,q$ and non-negative integers $a,b$, then $G$ has a soluble subgroup whose index and  Fitting height are bounded above in terms of $p^aq^b$ and $|C_G(\varphi )|$.
\end{theorem}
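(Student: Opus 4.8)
The plan is to reduce to the soluble case by Hartley's theorem, and then to bound the Fitting height by splitting the action of $\varphi$ into a coprime part, governed by classical coprime theory, and a part supported on the two primes $p,q$, governed by the prime-power case.

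\emph{Reduction to the soluble case.} First I would apply Hartley's theorem \cite{har92} to obtain a soluble subgroup of $(p^aq^b,m)$-bounded index; replacing it by the intersection of the $\varphi$-images (there are at most $|\varphi|$ of them) of its normal core in $G$, one may assume that $G$ itself is soluble and $\varphi$-invariant, at the cost of a further $(p^aq^b,m)$-bounded index. Passing to a subgroup only decreases the number of fixed points, so it remains to bound the Fitting height of a soluble group $G$ on which $\varphi$ acts with $|C_G(\varphi)|=m$. Write $\varphi=\varphi_p\varphi_q$ for the commuting $p$- and $q$-parts, of orders $p^a$ and $q^b$.

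\emph{The coprime part.} Because $\langle\varphi\rangle$ is a $\{p,q\}$-group, its action on a Hall $\{p,q\}'$-subgroup of $G$ is coprime, and the standard Frattini argument in $G\rtimes\langle\varphi\rangle$ produces a $\varphi$-invariant Hall $\{p,q\}'$-subgroup $K$. On $K$ the automorphism $\varphi$ acts coprimely with $|C_K(\varphi)|\le m$, so the linear coprime bounds of Turull \cite{tur84} and Hartley--Isaacs \cite{har-isa90} give a bound $h(K)\le h(C_K(\varphi))+c_1\alpha(\langle\varphi\rangle)$; since $\alpha(\langle\varphi\rangle)\le a+b$ and $h(C_K(\varphi))\le\log_2 m$, this is $(p^aq^b,m)$-bounded. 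Thus the contribution of the primes outside $\{p,q\}$ is controlled, and after factoring it out through the characteristic series with alternating $\{p,q\}'$- and $\{p,q\}$-factors the problem reduces to bounding the Fitting height and the $\{p,q\}$-length of the $\{p,q\}$-factors, i.e. to the case of a soluble $\{p,q\}$-group carrying an automorphism of order $p^aq^b$ with at most $m$ fixed points. This is exactly where the difficulty lies, since $\varphi$, not being of prime-power order, need not normalize a Sylow $p$- or $q$-subgroup.

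\emph{The $\{p,q\}$-part (the crux).} Here I would bound the $p$-length and the $q$-length separately, running the prime-power theorem of Hartley--Turau \cite{ha-tu} on the individual components $\varphi_p$ and $\varphi_q$ and gluing the two via the following fixed-point-transfer observation. If $V$ is a section of $G$ that is a nontrivial $p$-group and $C_V(\varphi)=1$, then $C_V(\varphi_q)=1$: otherwise $C_V(\varphi_q)$ would be a nontrivial $p$-group on which the $p$-automorphism $\varphi_p$ acts, forcing $C_V(\varphi)=C_{C_V(\varphi_q)}(\varphi_p)\neq 1$, a contradiction. Hence on every $p$-section where $\varphi$ is fixed-point-free its coprime component $\varphi_q$ is \emph{also} fixed-point-free, and symmetrically for $q$-sections and $\varphi_p$. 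This is the device that turns control of the full centralizer $C_G(\varphi)$ --- all that is given --- into usable coprime information about $\varphi_q$ on the $p$-layers and $\varphi_p$ on the $q$-layers, even though $|C_G(\varphi_p)|$ and $|C_G(\varphi_q)|$ may be arbitrarily large on their own. Feeding this into an induction on the Fitting height (using Lemma~\ref{l-aut}(b) to keep the number of fixed points under control in the successive quotients $G/F(G)$), together with the prime-power bounds applied to the $p$- and $q$-layers, bounds the number of alternations between $p$- and $q$-nilpotent layers, and hence the Fitting height of the $\{p,q\}$-part.

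\emph{The main obstacle.} The genuinely hard step is this last one: both $p$ and $q$ divide $|G|$, so neither $\varphi_p$ nor $\varphi_q$ acts coprimely on the corresponding Sylow subgroups, and there need be no $\varphi$-invariant Sylow $p$- or $q$-subgroup to fall back on coprime action. The fixed-point-transfer lemma is what prevents the two non-coprime primes from interacting uncontrollably; making the resulting interleaved induction quantitative --- so that the count of $p$-to-$q$ alternations, and thus the Fitting height, comes out $(p^aq^b,m)$-bounded --- is the technical heart of the argument. The index bound is already absorbed into the reduction to the soluble case, so this Fitting-height estimate completes the proof.
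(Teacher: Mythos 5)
First, a point of comparison: the paper does not prove Theorem~\ref{t-kh} at all --- it is imported as a black box from Hartley's manuscript \cite{hartley} and from \cite{khu15}, and indeed the whole architecture of the paper's main proof (the assumption that $n$ has at least three prime divisors, followed by Busetto--Jabara) exists precisely because this biprimary case is taken as given. So your sketch cannot be measured against an in-paper proof; measured on its own merits it has one step that is outright false and a core that is explicitly left unproved.

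The false step is the claim that ``the standard Frattini argument in $G\rtimes\langle\varphi\rangle$ produces a $\varphi$-invariant Hall $\{p,q\}'$-subgroup $K$.'' The Frattini argument gives $\widehat G = G\,N_{\widehat G}(K)$ for $\widehat G=G\rtimes\langle\varphi\rangle$, but to conclude that some $G$-conjugate of $K$ is normalized by $\varphi$ itself you must move $\varphi$ into a conjugate of $N_{\widehat G}(K)$, and that requires a Schur--Zassenhaus/coprimality step that is exactly what fails here, since $p$ and $q$ may divide $|N_G(K)|$. Concretely: let $G=C_2^3\rtimes C_7$ be the Frobenius group of order $56$ and let $\varphi$ be conjugation by an involution $t\in C_2^3$, an automorphism of order $2=p^aq^b$ with trivial $q$-part. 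The Hall $2'$-subgroups of $G$ are its eight Sylow $7$-subgroups, each self-normalizing ($N_G(P)=P$ of order $7$ contains no involution), so $\varphi$ normalizes none of them: no $\varphi$-invariant Hall $\{p,q\}'$-subgroup exists. This is exactly the difficulty the paper flags in its introduction (``there may not exist $\varphi$-invariant Sylow or Hall subgroups''), and which its Lemma~\ref{l-hall} circumvents only under the extra hypothesis $|C_{G/N}(\varphi)|=|C_G(\varphi)|$, forcing $N\subseteq I_G(\varphi)$ so that the discrepancy $S^\varphi=S^a$ can be corrected by writing $a=b^{-1}b^\varphi$. Without some such device your ``coprime part'' never gets off the ground, however correct the Turull/Hartley--Isaacs bound would be once an invariant $K$ existed.

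The second problem is that the ``crux'' paragraph is a statement of the difficulty, not a proof. Your fixed-point-transfer observation ($C_V(\varphi)=1$ on a $p$-section forces $C_V(\varphi_q)=1$, because $\varphi_p$ is a $p$-element acting on the $p$-group $C_V(\varphi_q)$) is correct, but it only bites on sections where $\varphi$ is fixed-point-free, and the hypothesis is merely $|C_G(\varphi)|=m$; Lemma~\ref{l-aut}(b) controls fixed points in quotients but produces no fixed-point-free sections, and Hartley--Turau \cite{ha-tu} applied to $\varphi_p$ alone needs control of $|C_G(\varphi_p)|$, which --- as you yourself note --- can be arbitrarily large. Likewise the reduction ``factor through the alternating $\{p,q\}'$/$\{p,q\}$ series'' is unjustified: bounding the Fitting height of a single Hall $\{p,q\}'$-subgroup does not bound the number of alternations (that is a gluing problem of the same order of difficulty as the theorem itself; the paper's gluing tool, Theorem~\ref{t-bj}, needs three Hall subgroups pairwise covering $\pi(G)$ and is structurally unavailable when only two primes divide $|\varphi|$). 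Your closing admission that ``making the resulting interleaved induction quantitative \dots is the technical heart'' concedes the point: the heart is missing, and it is precisely what the cited proofs of Hartley \cite{hartley} and Khukhro \cite{khu15} supply by quite different, substantially more involved arguments.
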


\section{Proofs of the main results}\label{s-finite}

We do not try to optimize the functions bounding the index and Fitting height in Theorem~\ref{t-fin}. If such an optimization were to be attempted, we mention in parentheses the relevant results that would lead to sharper bounds in terms of $m$ and $n$.

\begin{proof}[Proof of Theorem~\ref{t-fin}]
Recall that $G$ is a  finite group  admitting an automorphism  $\varphi$ of order~$n$ having $m=|C_G(\varphi )|$ fixed points; we need to prove that $G$ has a soluble subgroup whose index and Fitting height are $(m,n)$-bounded. By Hartley's theorem \cite{har92}, the group~$G$ has a $\varphi$-invariant soluble subgroup $H$ of $(m,n)$-bounded index. Replacing $G$ with $H$, we can assume from the outset that $G$ is a soluble group, and we need to prove that its Fitting height is $(m,n)$-bounded. 

It is expedient to prove simultaneously a technical fact, which is the second part of the following proposition. The first part of this proposition is the statement of Theorem~\ref{t-fin} in the soluble case, so  the proof of Theorem~\ref{t-fin} will be complete with the proof of this proposition.

\begin{proposition}\label{p-p}
Let $G$ be  a  finite soluble group  admitting an automorphism  $\varphi$ of order~$n$ having $m=|C_G(\varphi )|$ fixed points.
\begin{itemize}
  \item[\rm (a)]  Then the Fitting height of $G$ is $(m,n)$-bounded,
  that is, $h(G)\leqslant f(m,n)$ for some function $f(m,n)$ depending only on $m$ and $n$.
  \item[\rm (b)] Suppose that $N$ is a normal $\varphi$-invariant subgroup of $G$ such that $$|C_{G/N}(\varphi)|= |C_G(\varphi)|.$$ Then the Fitting height of $N$ is $(m,n)$-bounded, that is, $h(N)\leqslant g(m,n)$ for some function $g(m,n)$ depending only on $m$ and $n$.
\end{itemize}
\end{proposition}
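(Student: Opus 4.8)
The plan is to prove the two parts of Proposition~\ref{p-p} simultaneously by induction on $m=|C_G(\varphi)|$, keeping $n$ fixed throughout. The base case $m=1$ means that $\varphi$ is fixed-point-free; since $\langle\varphi\rangle$ is cyclic, hence nilpotent, Dade's theorem bounds $h(G)$ in terms of $\alpha(\langle\varphi\rangle)$, which is $n$-bounded, and the same applies to $N$ in part~(b) because $C_N(\varphi)\leqslant C_G(\varphi)=1$ forces $\varphi$ to be fixed-point-free on $N$ as well. For the inductive step I assume both parts for all configurations with fewer than $m$ fixed points, and I would prove part~(b) first, then deduce part~(a) from it.

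For part~(a), granted part~(b) at the present value of $m$, the idea is to isolate the portion of $G$ on which the number of fixed points is \emph{stable} in quotients. I would choose a $\varphi$-invariant normal subgroup $N$ maximal subject to $|C_{G/N}(\varphi)|=|C_G(\varphi)|$; part~(b) bounds $h(N)$. Passing to $\bar G=G/N$, maximality together with Lemma~\ref{l-aut}(b) forces $|C_{\bar G/\bar V}(\varphi)|<m$ for every minimal $\varphi$-invariant normal subgroup $\bar V\leqslant\bar G$, since otherwise the full preimage $V^{*}\trianglelefteq G$ of $\bar V$ would be a strictly larger $\varphi$-invariant normal subgroup with stable fixed points, contradicting the choice of $N$. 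As $G$ is soluble, $\bar V$ is elementary abelian, so $h(\bar V)=1$, while $\bar G/\bar V$ has fewer than $m$ fixed points and is covered by the induction hypothesis. Subadditivity of the Fitting height along the normal series $N\leqslant V^{*}\leqslant G$ then gives $h(G)\leqslant h(N)+1+h(\bar G/\bar V)$, which is $(m,n)$-bounded.

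Part~(b) is the heart of the matter, and this is where Theorems~\ref{t-bj} and~\ref{t-kh} enter. Lemma~\ref{l-aut}(c) turns the hypothesis into the structural condition $N\subseteq I_G(\varphi)$. If $|C_N(\varphi)|<m$, the induction hypothesis for part~(a) bounds $h(N)$ at once, so the essential case is $C_N(\varphi)=C_G(\varphi)$, i.e.\ all fixed points already lie in $N$ while $\varphi$ acquires $m$ ``new'' fixed points on $G/N$. In this rigid configuration the plan is to exploit $N\subseteq I_G(\varphi)$ to secure $\varphi$-invariant Hall subgroups of $N$: for primes not dividing $n$ these exist automatically by coprime action, and the primes dividing $n$---of which there are only $n$-boundedly many---must be accommodated using the condition $N\subseteq I_G(\varphi)$. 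With invariant Hall subgroups available, I would split $\pi(N)$ into three subsets $\sigma,\tau,\nu$ with $\sigma\cup\tau=\sigma\cup\nu=\tau\cup\nu=\pi(N)$ and choose $\varphi$-invariant Hall subgroups $N_\sigma,N_\tau,N_\nu$, arranged so that on each of them the relevant action is controlled either by coprime theory (bounded Fitting height, via Dade in the fixed-point-free case) or by at most two primes dividing $n$, where the biprimary Theorem~\ref{t-kh} applies. Theorem~\ref{t-bj} would then yield $h(N)\leqslant h(N_\sigma)+h(N_\tau)+h(N_\nu)-2$, an $(m,n)$-bounded quantity.

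The hard part will be exactly the reduction to the situation where $\varphi$-invariant Hall subgroups exist at the primes dividing $n$, where coprimality is unavailable; this is the step the Introduction singles out, and it is the reason parts~(a) and~(b) must be proved in tandem, with $N\subseteq I_G(\varphi)$ serving as the lever. Coordinating the partition $\sigma,\tau,\nu$ with $\pi(n)$ so that the non-coprime behaviour on each Hall subgroup is genuinely governed by at most two primes---thereby making Theorem~\ref{t-kh} usable---is the combinatorial crux I expect to be most delicate. A further technical point is that Theorem~\ref{t-kh} supplies a soluble subgroup of bounded index and bounded Fitting height rather than a bound on the Fitting height of the whole Hall subgroup; I anticipate bridging this gap by first upgrading the biprimary statement to a whole-group bound in the soluble case, using the same fixed-point-drop telescoping as in part~(a), and only then feeding the result into Theorem~\ref{t-bj}.
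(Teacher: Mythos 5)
Your base case, your derivation of part~(a) from part~(b) via a maximal ``stable'' normal subgroup and a minimal $\varphi$-invariant normal subgroup of the quotient (a legitimate variant of the paper's use of $F_{g(m,n)+1}(G)$ and Lemma~\ref{l-aut}), and your identification of $N\subseteq I_G(\varphi)$ as the lever are all sound. But your induction scheme is on $m$ with $n$ \emph{fixed}, and that cannot support part~(b): the reduction that actually makes part~(b) work strictly decreases $n$ while leaving $m$ untouched. Concretely, for a prime $p\mid n$ one takes a $\varphi$-invariant Hall $p'$-subgroup $N_{p'}$ of $N$ (Lemma~\ref{l-hall}, whose one-line proof from $N\subseteq I_G(\varphi)$ is exactly the step you deferred: if $S^\varphi=S^a$ with $a=[b,\varphi]$, then $S^{b^{-1}}$ is $\varphi$-invariant --- and this works for \emph{every} $\pi$ at once, so your separate appeal to ``coprime action'' for primes outside $\pi(n)$, itself unjustified since coprime-action existence theorems require $(|N|,|\varphi|)=1$, is unnecessary). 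On $N_{p'}$ the $p$-part $\varphi_p$ acts coprimely, and Thompson's theorem \cite{tho64} bounds $h(N_{p'})$ in terms of $|\varphi_p|$ and $h\bigl(C_{N_{p'}}(\varphi_p)\bigr)$; that centralizer admits $\varphi_{p'}$, of order $n/p^{k_p}$, with at most $m$ --- possibly exactly $m$ --- fixed points. So the inductive call is to part~(a) at $(m,\,n/p^{k_p})$: smaller $n$, same $m$, which your framework provides no hypothesis for. This is precisely why the paper inducts on $m+n$. Note also that your ``coprime theory (via Dade in the fixed-point-free case)'' is unavailable in the inductive step, where $m>1$: what is needed is Thompson's non-fixed-point-free theorem, whose required input is the centralizer bound just described.

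Moreover, the partition you hope for is combinatorially impossible, not merely delicate. The condition $\sigma\cup\tau=\sigma\cup\nu=\tau\cup\nu=\pi(N)$ forces every prime of $\pi(N)$ to lie in at least two of the three sets; counting incidences, $2\,|\pi(N)\cap\pi(n)|\leqslant |\sigma\cap\pi(n)|+|\tau\cap\pi(n)|+|\nu\cap\pi(n)|$, so if $|\pi(N)\cap\pi(n)|\geqslant 4$ then some one of $\sigma,\tau,\nu$ meets $\pi(n)$ in at least three primes. Since restricting $\varphi$ to a Hall subgroup of $N$ need not reduce its order at all, the biprimary Theorem~\ref{t-kh} cannot be applied to that Hall factor, and no choice of $\sigma,\tau,\nu$ makes every factor ``biprimary relative to $n$''. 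The paper instead uses Theorem~\ref{t-kh} only once, outside the Hall decomposition, to dispose of the case where $n$ itself is a product of at most two prime powers; when $n$ has three distinct prime divisors $p,q,r$, it takes the three sets to be the complements of the \emph{single} primes $p,q,r$, bounds each $h(N_{p'})$ by the Thompson-plus-induction argument above, and only then applies Theorem~\ref{t-bj} as you intend. (Your closing worry is a non-issue: if a soluble group has a subgroup of index at most $k$ with Fitting height at most $h_0$, its normal core has index at most $k!$, whence the whole group has Fitting height at most $h_0+\log_2(k!)$; no telescoping is needed.)
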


\begin{proof} We prove parts (a) and (b) simultaneously by induction on $m+n=|C_G(\varphi)|+|\varphi|$ establishing the existence of the required functions $f(m,n)$  and $g(m,n)$ bounding above the Fitting heights of $G$ and $N$. These  functions can be assumed to be monotonically non-decreasing with respect to each of the parameters $m,n$, by passing respectively to
$$
\max \{f(k,l)\mid k\leqslant m,\;l\leqslant n\}\qquad \text{and}\qquad \max \{g(k,l)\mid k\leqslant m,\;l\leqslant n\}.
$$
 
As a base of this induction, we can take the case $m=1$ (for any value of $n$), when the automorphism $\varphi$ has no non-trivial fixed points in $G$. Then the Fitting height $h(G)$ of $G$ is $n$-bounded by a special case of Dade's theorem \cite{dad69}, and the Fitting height $h(N)$ of $N$ is of course at most $h(G)$. Actually, in this fixed-point-free case Dade's theorem \cite{dad69} even gives  an upper  bound for $h(G)$ in terms of the composition length $\alpha(\langle\varphi\rangle)$ of $\langle\varphi\rangle$, and Jabara's result \cite{jab} improves Dade's exponential bound to the quadratic one $7\alpha(\langle\varphi\rangle)^2$. Thus we can put $f(1,n)=g(1,n)=7\alpha(\langle\varphi\rangle)^2$.

In the induction step, for given $m,n$, we firstly prove part (b) using the induction hypothesis for part (a), and then derive part (a).  

First we establish the existence of $\varphi$-invariant Hall subgroups of $N$ under the hypotheses of part~(b).

\begin{lemma}\label{l-hall}
Under the hypotheses of part (b) of Proposition~\ref{p-p}, for every set of primes $\pi\subseteq\pi (N)$ the group $N$ has a $\varphi$-invariant Hall $\pi$-subgroup.
\end{lemma}

\begin{proof}
By Lemma~\ref{l-aut}(c) the condition
$|C_{G/N}(\varphi)|= |C_G(\varphi)|$ implies that $N\subseteq I_G(\varphi)$. 
 Let $S$ be a Hall $\pi$-subgroup of $N$. If $S^\varphi\ne S$, then $S^\varphi=S^a$ for some $a\in N$. Since $N\subseteq I_G(\varphi)$, there is $b\in G$ such that $a=[b,\varphi]=b^{-1}b^\varphi$. Then $S^{b^{-1}}$ is also a Hall $\pi$-subgroup of $N$, which is $\varphi$-invariant:
$$
\big(S^{b^{-1}}\big)^\varphi=S^{a(b^{-1})^\varphi}=S^{b^{-1}b^\varphi(b^{-1})^\varphi}=S^{b^{-1}}.
\eqno\qedhere$$
\end{proof}

We return to proving part (b) of the proposition. If  $n=|\varphi|$  is a product of at most two prime powers, then the Fitting height $h(G)$ of $G$ is $(m,n)$-bounded by Theorem~\ref{t-kh}, and  the Fitting height $h(N)$ of $N$ is at most $h(G)$. (When $n$ is a prime-power, even a stronger conclusion holds by a result of Hartley and Turau \cite{ha-tu} giving a subgroup of $(m,n)$-bounded index whose Fitting height is at most the composition length $\alpha(\langle\varphi\rangle)$.)
Therefore we can assume that $n$ is divisible by at least three different primes. 

For any prime divisor $p$ of $n=|\varphi |$, let  $p^{k_p}$ be the highest power of $p$ dividing $n$,  let $\langle\varphi_p\rangle$ be the Sylow $p$-subgroup of $\langle\varphi\rangle$, and let $\langle\varphi_{p'}\rangle$ be the Hall $p'$-subgroup of $\langle\varphi\rangle$, so that $\varphi=\varphi_p\cdot \varphi_{p'}$. Let $N_{p'}$ be a $\varphi$-invariant Hall $p'$-subgroup of $N$, which exists by Lemma~\ref{l-hall}. Here, $N_{p'}$ is a Hall $\pi$-subgroup for $\pi=\pi(N)\setminus\{p\}$; it is possible that $N_{p'}=N$, when $p$ does not divide $|N|$.

\begin{lemma}
\label{l-hp}
The Fitting height of $N_{p'}$ is bounded in terms of $m$ and $n$.
\end{lemma}

\begin{proof}
Note that $\varphi_p$ is a coprime automorphism of $N_{p'}$ (and this is also true in the case where $N_{p'}=N$, that is, when $p$ does not divide $|N|$). The centralizer $C=C_{N_{p'}}(\varphi_p)$ admits the automorphism $\varphi_{p'}$ of order $n/p^{k_p}$ whose fixed-point subgroup $C_{C}(\varphi_{p'})$ is contained in $C_G(\varphi)$ and therefore has order at most $m$. Since $$|C_{C}(\varphi_{p'})|+|\varphi_{p'}|\leqslant m+n/p^{k_p}<m+n,$$
we can apply the induction hypothesis for part (a)  to $C$ and its automorphism~$\varphi_{p'}$, by which the Fitting height of $C=C_{N_{p'}}(\varphi_p)$ is at most $f(m,n/p^{k_p})$. By Thompson's theorem~\cite{tho64} applied to $N_{p'}$ and its coprime automorphism $\varphi_p$, the Fitting height $h(N_{p'})$ of $N_{p'}$ is bounded in terms of $|\varphi_p|=p^k<n$ and  the Fitting height of $C_{N_{p'}}(\varphi_p)$, which is  at most $f(m,n/p^{k_p})$. Hence the Fitting height $h(N_{p'})$ is $(m,n)$-bounded. (Actually, Thompson's theorem \cite{tho64} even provides a bound depending on the composition length, rather than the order, of the group of automorphisms, and  the later theorems of Turull \cite{tur84} or Hartley--Isaacs \cite{har-isa90} give better bounds.)
\end{proof}

We return to the proof of part (b) of the proposition. 
Let $g_p(m,n)$ be the bound for the Fitting height $h(N_{p'})$ of  $N_{p'}$ given by Lemma~\ref{l-hp}.

Recall that $n$ is divisible by at least three different primes by our assumption; so we pick any three different primes $p,q,r$ dividing $n=|\varphi|$.  By Lemma~\ref{l-hall} there exist $\varphi$-invariant Hall $p'$-, $q'$-, $r'$-subgroups $N_{p'},N_{q'},N_{r'}$, for which we obviously have
$$
N=N_{p'}\cdot N_{q'}=N_{p'}\cdot N_{r'}=N_{q'}\cdot N_{r'}.
$$
By the Busetto--Jabara Theorem~\ref{t-bj}, the Fitting height $h(N)$ of $N$ satisfies the inequality
$$h(N)\leqslant  h(N_{p'})+h(N_{q'})+h(N_{r'})-2.$$
Since by Lemma~\ref{l-hp} we have
$$
h(N_{p'})\leqslant g_p(m,n),\qquad  h(N_{q'})\leqslant g_q(m,n),\qquad h(N_{r'})\leqslant g_r(m,n),
$$
 we  complete the proof of part (b) by setting 
 $$
 g(m,n)=g_p(m,n)+ g_q(m,n)+ g_r(m,n)-2.
 $$

We now finish  the induction step in the proof of part (a) of the proposition; recall that the induction is conducted with respect to $m+n$. For the function $g(m,n)$ in part (b), if  $F_{g(m,n)}(G)\ne G$, then $F_{g(m,n)+1}(G)$ has Fitting height greater than $g(m,n)$ and therefore $|C_{G/F_{g(m,n)+1}(G)}(\varphi)|$ must be strictly smaller than $m=|C_G(\varphi)|$ by part (b). Clearly, the order of the automorphism of $G/F_{g(m,n)+1}(G)$ induced by $\varphi$ is at most $n$. By the induction hypothesis, the Fitting height of $G/F_{g(m,n)+1}(G)$ is at most $f(m-1,n)$. As a result, the Fitting height of $G$ is at most $g(m,n)+1+f(m-1,n)$.  In the remaining case where  $F_{g(m,n)}(G)=G$, the Fitting height of $G$ is at most $g(m,n)$.  Thus we can complete  the induction step for part (a) by setting   $f(m,n)=g(m,n)+1+f(m-1,n)$.
 \end{proof}
 As explained at the beginning of the section, Proposition~\ref{p-p} completes the proof of Theorem~\ref{t-fin}.
 \end{proof}

 \begin{proof}[Proof of Corollary~\ref{c-fin}]
 Recall that here we have a finite group  $G$ containing an element $g$ with centralizer of order $m=|C_G(g )|$; we need to show that  then $G$ has a soluble subgroup whose index and  Fitting height are $m$-bounded. The result follows immediately by Theorem~\ref{t-fin} applied to the inner automorphism of $G$ induced by conjugation by $g$: the order of this automorphism divides $m$, while the number of fixed points is exactly $m$.
 \end{proof}
 
 \begin{proof}[Proof of Corollary~\ref{c-loc-fin}]
 Recall that $G$ is a locally finite group containing an element $g$ with centralizer of order $m=|C_G(g)|$; we need to show that $G$ has a locally soluble subgroup of finite $m$-bounded index which has a normal series of finite $m$-bounded length with locally nilpotent factors.
We apply the well-known inverse
limit argument. Let $f(m)$ and $g(m)$ be the functions given by Corollary~\ref{c-fin} such that if $G$ is a finite group, then it has a soluble subgroup of index at most $f(m)$ whose Fitting height is at most $g(m)$. Let $\Sigma$ be the family of all finite subgroups
of $G$ containing $g$; this is a directed poset by inclusion. For
each $H\in \Sigma$ let $S_H$ be the set of all
subgroups $R\leqslant H$ of index at most $f(m)$ whose Fitting height is at most $g(m)$. Each $S_H$ is
non-empty by Corollary~\ref{c-fin}. Clearly, each $S_H$ is finite. For any two
$H_1,H_2\in \Sigma$ such that $H_1\geqslant H_2$ there is a map
$\varphi _{H_1,H_2}:S_{H_1}\rightarrow S_{H_2}$ given by taking
the intersections with~$H_2$. This inverse system of finite sets
has non-empty inverse limit (see, for example, \cite[Theorem~1.K.1]{kw}). The union of the corresponding subgroups $R$ over any element of the inverse limit gives the required subgroup of~$G$.
 \end{proof}

\begin{remark}
In this paper, we did not try to obtain explicit estimates  in terms of $m$ and $n$ for the functions bounding  the index and Fitting height in Theorem~\ref{t-fin}  and the corollaries. It was  important for us  that these functions depend only on $m$ and $n$, which fact makes it possible to derive Corollary~\ref{c-loc-fin}  for locally finite groups. If explicit estimates were to be attempted, one would have to begin with getting an explicit  bound for the index of a soluble subgroup in Hartley's generalized Brauer--Fowler theorem \cite{har92}, as well as a bound for the Fitting height of a finite soluble group with an automorphism of order $n=p^aq^b$ having $m$ fixed points implied by the theorems of Hartley \cite{hartley} or Khukhro  \cite{khu15}. Once these initial estimates are obtained, it would be straightforward to estimate the bounding functions in this paper by following the proofs in this paper. However, the more important direction of further research is the question of existence, under the hypotheses of Theorem~\ref{t-fin}, of a  subgroup of $(m,n)$-bounded index whose Fitting height is bounded in terms of $n=|\varphi|$ alone, or even in terms of the composition length $\alpha(\langle\varphi\rangle)$ of $\langle\varphi\rangle$.
\end{remark}

\end{document}